\newcommand{\bcen}{\begin{center}}     \newcommand{\ecen}{\end{center}}
\newcommand{\bay}{\begin{array}}      \newcommand{\eay}{\end{array}}
\newcommand{\beq}{\begin{eqnarray*}}      \newcommand{\eeq}{\end{eqnarray*}}
\def\hl{\mbox{\rm hl}}
\def\hr{\mbox{\rm hr}}
\def\hw{\mbox{\rm hw}}
\def\rad{\mathrm{rad}}
\def\dim{\mathrm{dim}}
\def\mod{\mathrm{mod}}
\def\Image{\mathrm{Im}}
\def\Ker{\mathrm{Ker}}
\def\proj{\mathrm{proj}}
\begin{document}

\newtheorem{theorem}{Theorem}
\newtheorem{proposition}{Proposition}
\newtheorem{lemma}{Lemma}
\newtheorem{corollary}{Corollary}
\newtheorem{remark}{Remark}
\newtheorem{example}{Example}
\newtheorem{definition}{Definition}
\newtheorem*{conjecture}{Conjecture}
\newtheorem*{question}{Question}

\title{\large\bf Indecomposables with smaller cohomological length in the derived category of gentle algebras}

\author{\large Chao Zhang}

\date{\footnotesize Department of Mathematics, Guizhou University, Guiyang 550025, P.R. China.\\ E-mail:
zhangc@amss.ac.cn}

\maketitle

\begin{abstract} Bongartz and Ringel proved that there is no gaps in the sequence of
lengths of indecomposable modules for the finite-dimensional algebras over algebraically closed fields.
The present paper mainly study this ``no gaps" theorem as to cohomological length
for the bounded derived category $D^b(A)$
of a gentle algebra $A$: if there is
an indecomposable object in $D^b(A)$ of cohomological length $l>1$, then there exists an indecomposable
with cohomological length $l-1$.
\end{abstract}

\medskip

{\footnotesize {\bf Mathematics Subject Classification (2010)}:
16E05; 16E10; 16G20; 18E30}

\medskip

{\footnotesize {\bf Keywords} : cohomological length; generalized string(band); derived discrete algebras.}

\bigskip

\section{Introduction}

Throughout this paper, $k$ is an algebraically closed field, all
algebras are connected, basic, finite-dimensional, associative
$k$-algebras with identity, and all modules are finite-dimensional
right modules, unless stated otherwise. During the study of the representation
theory of finite-dimensional algebras, the classification and
distribution of indecomposable modules play a significant role.
Besides the famous Brauer-Thrall conjectures \cite{Aus74, Jan57, NR75,Ro68, Ro78},
Bongartz and Ringel proved the
following elegant theorem in \cite{Bo13, R11}:

\medskip

{\bf Theorem 0.} {\it  Let $A$ be a finite-dimensional algebra. If there is
an indecomposable $A$-module of length $n>1$, then there exists an indecomposable
$A$-module of length $n-1$.}
\medskip

Since Happel \cite{Hap88}, the bounded derived categories of
finite-dimensional algebras have been studied widely. The
classification and distribution of indecomposable objects in the
bounded derived category is still an important theme
in representation theory of algebras. In this context, the definitive work was
due to Vossieck \cite{Vo01}. He classified a class of algebras, {\it
derived discrete algebras,} that is, with only finitely many indecomposables distributed in each
cohomology dimension vector in their bounded derived category.
In the research of Brauer-Thrall type theorems for the bounded
derived category of an algebra  \cite{ZH16}, some numerical invariants, i.e.
the cohomological length, width,
and range of a complex in bounded derived category are introduced:
let $A$ be a finite-dimensional algebra with $D^b(A)$ the
bounded derived module category, the {\it cohomological length, cohomological width,
cohomological range} of a complex $X^{\bullet} \in
D^b(A)$ are $$\hl(X^{\bullet}) := \max\{\dim H^i(X^{\bullet}) \; | \;i \in \mathbb{Z}\},$$
$$\hw(X^{\bullet}) := \max\{j-i+1 \; | \; H^i(X^{\bullet}) \neq 0 \neq H^j(X^{\bullet})\},$$
$$\hr(X^{\bullet}) := \hl(X^{\bullet}) \cdot \hw(X^{\bullet}),$$
respectively. Moreover, the derived Brauer-Thrall type theorems are established in \cite{ZH16} with
cohomological range to be the replacement of length of modules in classical Brauer-Thrall conjectures.
Note that there is a
full embedding of $\mod A$ into $D^b(A)$ which sends any $A$-module to the
corresponding stalk complex.
Obviously, the dimension of an $A$-module $M$ is equal to the
cohomological length and the cohomological range of the stalk complex $M$.
As pointed out as a question in \cite{ZH16}, it is natural to
consider the derived version of Bongartz-Ringel's theorem and ask
whether there are no gaps in the sequence of cohomological lengths (ranges) of
indecomposable objects in $D^b(A)$.

\medskip

{\bf Question I} {\it Is there an indecomposable object in $D^b(A)$ of cohomological
length $l-1$ if there is one of cohomological length $l \geq 2$?
}

\medskip

{\bf Question II} {\it Is there an indecomposable object in $D^b(A)$ of cohomological
range $r-1$ if there is one of cohomological range $r \geq 2$?
}

\medskip

Evidently, the questions have positive answers for representation-infinite algebras
by Bongartz-Ringel's theorem for the module category of algebras.
However, it seems difficult to give answers for general finite-dimensional algebras
to above questions since we know little about the description
of indecomposables in the bounded derived category.

In this paper, we prove that for gentle algebras,
the answer to question I is positive,
but the answer of question II is negative. To be precise,
there is no gaps in the sequence of cohomological lengths of
indecomposables in the bounded derived category of gentle algebras.
In addition, we construct a gentle algebra $A_0$ such that there is
an indecomposable object in $D^b(A_0)$ of cohomological range $r_0$ but no
indecomposable object with cohomological range $r_0-1$. Our result relies
on the constructions of indecomposables in the bounded derived category
of gentle algebras due to Bekkert and Merklen \cite{BM03}.

The paper is organized as follows: in Section 2, we shall recall
the constructions of indecomposable objects in the bounded derived category of
gentle algebras. In Section 3, we shall prove the main theorem of this paper.
Finally, we
produce a gentle algebra which demonstrates that Question 2 has a negative answer.

\section{Indecomposables in bounded derived category of gentle algebras}

In this section, we mainly recall the description of the indecomposable objects
in the bounded derived category of gentle algebras from \cite{BM03}.

Let $A$ be an algebra admitting a presentation $kQ/I$ where
$Q$ is a finite quiver with vertex set $Q_0$ and arrow set $Q_1$,
and where $I$ is an admissible ideal of $kQ$. Throughout this paper, we
write the path in $kQ/I$ from left to right.
Recall that $A=kQ/I$ is a {\it gentle algebra} if

{\rm (1)} the number of arrows with a given source (resp. target) is at most two;

{\rm (2)} for any arrow $\alpha\in Q_1$, there is at most one arrow $\beta\in Q_1$
such that $s(\alpha)=t(\beta)$ (resp. $t(\alpha)=s(\beta)$) and $\beta\alpha\in I$
(resp. $\alpha\beta\in I$).

{\rm (3)} for any arrow $\alpha\in Q_1$, there is at most one arrow $\gamma\in Q_1$
such that $s(\alpha)=t(\gamma)$ (resp. $t(\alpha)=s(\gamma)$) and $\gamma\alpha\notin I$
(resp. $\alpha\gamma\notin I$).

{\rm (4)} $I$ is generated by a set of paths of length two.

\medskip

Let $A=kQ/I$ be a gentle algebra. We need to recall some notations.
For a path $p=\alpha_1\alpha_2\cdots\alpha_r$
with $\alpha_i\in Q_1$, we say its {\it length} $l(p)=r$.
Let $\bf{Pa}_{\geq 1}$ be the set of all paths in $kQ/I$ of length
greater than $1$.
For any arrow $\alpha\in Q_1$, we denote by $\alpha^{-1}$
its formal {\it inverse} with $s(\alpha^{-1})=t(\alpha)$ and $t(\alpha^{-1})=s(\alpha)$.
For a path $p=\alpha_1\alpha_2\cdots\alpha_r$, its {\it inverse} $p^{-1}=\alpha_r^{-1}\alpha_{r-1}^{-1}
\cdots\alpha_1^{-1}$. A sequence $w=w_1w_2\cdots w_n$ is a {\it walk}
(resp. a {\it generalized walk}) if
each $w_i$ is of form $p$ or $p^{-1}$ with $p\in Q_1$ (resp. $p\in \bf{Pa}_{\geq 1}$), and
$s(w_{i+1})=t(w_i)$ for $i=1, 2, \cdots, n-1$.

We denote by ${\bf St}$ the set of all walks $w=w_1w_2\cdots w_n$ such that
$w_{i+1}\neq w_i^{-1}$ for each $1\leq i<n$ and no subword of $w$ or $w^{-1}$ lies in $I$. We call
an element in ${\bf{St}}$ a {\it string}. By ${\bf\overline{Gst}}$ we denote the set of all generalized
walks such that

{\rm (1)} $w_iw_{i+1}\in I$ if $w_i, w_{i+1}\in \bf{Pa}_{\geq 1}$;

{\rm (2)} $w^{-1}_{i+1}w_i^{-1}\in I$ if $w^{-1}_i, w^{-1}_{i+1}\in \bf{Pa}_{\geq 1}$;

{\rm (3)} $w_iw_{i+1}\in \bf{St}$ otherwise.

We write $\bf{Gst}$ the set consisting of all trivial paths
and the representatives of ${\bf\overline{Gst}}$ modulo the relation
$w\sim w^{-1}$. An element $w=w_1w_2\cdots w_n$ in $\bf{Gst}$ is called a {\it generalized string}
of width $n$.

Generalized bands are special generalized strings. Before its definition, we
need the following notation. Let $w=w_1w_2\cdots w_n$ be a generalized string,
set $\mu_w(0)=0$, $\mu_w(i)=\mu_w(i-1)-1$ if $w_i\in \bf{Pa}_{\geq 1}$ and
$\mu_w(i)=\mu_w(i-1)+1$ otherwise. Suppose ${\bf \overline{GBa}}$ is the set of all
generalized walks $w=w_1w_2\cdots w_n$ such that

{\rm (1)} $s(w_1)=t(w_n)$;

{\rm (2)} $\mu_w(n)=\mu_w(0)=0$;

{\rm (3)} $w^2=w_1w_2\cdots w_nw_1w_2\cdots w_n\in {\bf \overline{Gst}}$.

We denote by $\bf{Gba}$ the set consisting of
the representatives of ${\bf\overline{Gba}}$ modulo the relation
$w\sim w^{-1}$ and $w_1w_2\cdots w_n\sim w_2\cdots w_nw_1$.
We call an element in $\bf{Gba}$ a {\it generalized band}.

By the description of Bekkert and Merklen \cite{BM03}, a generalized string  in $A=kQ/I$
corresponds to a unique indecomposable object of bounded homotopy category $K^b(\proj A)$
up to shift, while a generalized band $w$ corresponds to a unique family of indecomposables
$\{P^{\bullet}_{w,\lambda} \; | \;
 \lambda \in k^*, \; d>0 \}$ in $K^b(\proj A)$ up to shift, in which
$P^{\bullet}_{w,\lambda}$ and $P^{\bullet}_{w,\lambda'}$
have the same cohomology dimension vector for any $\lambda, \lambda'$. Thus $A$ is derived discrete if and only if
$A$ contains no generalized bands, see \cite{BM03, Vo01}.

Let $\alpha$ be a path in $\bf{Pa}_{\geq 1}$. Then it induces a morphism $P(\alpha)$ from $P_{t(\alpha)}$
to $P_{s(\alpha)}$ by left multiplication, where $P_i$ is the indecomposable projective right
$A$-module $e_iA$ associated to vertex $i$. More precisely,
$P(\alpha)(u)=\alpha u$ for any $u\in kQ/I$.

\begin{definition} Let $w=w_1w_2\cdots w_n$ be a generalized string. Then the complex of projective modules
$P_w^{\bullet}=\xymatrix{
\cdots \ar [r]^{d_w^{i-1}}& P_w^i \ar [r]^{d_w^{i}} &P_w^{i+1} \ar [r]^{d_w^{i+1}} &\cdots}$
is defined as follows. The module on the $i$-th component
$$P_w^i=\bigoplus_{j=0}^n \delta(\mu_w(j),i) P_{c(j)},$$
where $\delta$ is the Kronecker sign, $c(j)=s(w_{j+1})$ for $j<n$ and $c(n)=t(w_n)$.
The differential $d_w^i$ is
given by the matrix $(d_{j,k}^i)$ with entries, where
$$d^i_{j,k}=\left\{\begin{array}{lll} P(w_{j}), & \mbox{if } w_j\in{\bf Pa_{\geq 1}}, \mu_w(j)=i, k=j-1;
\\ P(w_{j+1}^{-1}), & \mbox{if } w_{j+1}^{-1}\in{\bf Pa_{\geq 1}}, \mu_w(j)=i, k=j+1;
\\ 0, & \mbox{otherwise. }
 \end{array}\right.$$
\end{definition}

\begin{definition}
Let $w=w_1w_2\cdots w_n$ be a generalized band. Then for any
$\lambda \in k^*, \; d>0$, the complex of projective modules
$$P_{w,\lambda}^{\bullet}=\xymatrix{
\cdots \ar [r]^{d_w^{i-1}}& P_{w,\lambda}^i \ar [r]^{d_w^{i}} &P_{w,\lambda}^{i+1} \ar [r]^{d_w^{i+1}} &\cdots}$$
is defined as follows. The module on the $i$-th component
$$P_{w,\lambda}^i=\bigoplus_{j=0}^{n-1} \delta(\mu_w(j),i) P_{c(j)}^d.$$
The differential $d_w^i=(d_{j,k}^i)$ and
$$d^i_{j,k}=\left\{\begin{array}{lllll} P(w_{j}){\bf Id}_d, & \mbox{if } w_j\in{\bf Pa_{\geq 1}}, \mu_w(j)=i, k=j-1;
\\ P(w_{j+1}^{-1}){\bf Id}_d, & \mbox{if } w_{j+1}^{-1}\in{\bf Pa_{\geq 1}}, \mu_w(j)=i, k=j+1;
\\ P(w_{n})J_{\lambda, d}, & \mbox{if } w_n\in{\bf Pa_{\geq 1}}, \mu_w(n)=0=i, k=n-1;
\\ P(w_{n}^{-1})J_{\lambda, d}, & \mbox{if } w_{n}^{-1}\in{\bf Pa_{\geq 1}}, \mu_w(n-1)=i, k=0;
\\ 0, & \mbox{otherwise, }
 \end{array}\right.$$ where $J_{\lambda, d}$ the upper
triangular $d \times d$ Jordan block with eigenvalue $\lambda \in k^*$.
\end{definition}
Note that the definitions above are slightly different from ones in \cite{BM03} since
we consider right projective modules throughout this paper.

Recall that a complex $X^{\bullet}=(X^i, d^i) \in C(A)$ is said to
be {\it minimal} if $\Image d^i \subseteq \rad X^{i+1}$ for all $i \in
\mathbb{Z}$. For a complex $P^{\bullet}$ in $C^{-,b}(\proj A)$ of the form $$P^{\bullet}=
\cdots \longrightarrow P^{-n-1}\stackrel{d^{-n-1}}{\longrightarrow}
P^{-n}\stackrel{d^{-n}}{\longrightarrow} \cdots
\longrightarrow P^{m-1}\stackrel{d^{m-1}}{\longrightarrow}
P^m\longrightarrow 0, $$ its {\it brutal truncation} $\sigma_{\geq -n}(P^{\bullet})$ is
$$\sigma_{\geq -n}(P^{\bullet})=0\longrightarrow
P^{-n}\stackrel{d^{-n}}{\longrightarrow} \cdots
\longrightarrow P^{m-1}\stackrel{d^{m-1}}{\longrightarrow}
P^m\longrightarrow 0. $$

The following lemma due to \cite[Proposition 2]{ZH16} sets up the connection between the
indecomposable objects in $K^b(\proj A)$ and those in $K^{-,b}(\proj
A)$.

\begin{lemma} \label{prop-indec}
Let $P^{\bullet} \in K^{-,b}(\proj A)$ be a minimal complex and $-n
:= \min\{i \in \mathbb{Z} \; | \; H^i(P^{\bullet}) \neq 0\}$. Then
$P^{\bullet}$ is indecomposable if and only if so is the brutal
truncation $\sigma_{\geq j}(P^{\bullet}) \in K^b(\proj A)$ for some $j< -n$ or for all some $j< -n$.
\end{lemma}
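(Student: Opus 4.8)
The plan is to reduce the assertion to a comparison of endomorphism rings, using minimality at every step. First I would record the structural consequence of the hypothesis. Since $-n$ is the least degree carrying cohomology, $H^i(P^{\bullet})=0$ for all $i<-n$. Fix any $j<-n$ and set $Z:=\Ker(d^{j})\subseteq P^{j}$. Then the part of $P^{\bullet}$ in degrees $\le j$, namely $\cdots\to P^{j-1}\xrightarrow{d^{j-1}}P^{j}$, is a \emph{minimal} projective resolution $\cdots\to P^{j-1}\to Z\to0$, the minimality being inherited from $P^{\bullet}$. Crucially, the submodule $Z=\Ker d^{j}$ is computed entirely inside the brutal truncation $T^{\bullet}:=\sigma_{\ge j}(P^{\bullet})$, and $T^{\bullet}$ is again minimal because its differentials are those of $P^{\bullet}$. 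Thus $P^{\bullet}$ is reconstructed from $T^{\bullet}$ by splicing on the (essentially unique) minimal projective resolution of $Z$.

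The central observation I would prove is that, for a minimal complex of projectives, indecomposability in the homotopy category coincides with indecomposability in the category $C(A)$ of complexes. Indeed, if $f$ is null-homotopic, $f^{i}=d^{i-1}h^{i}+h^{i+1}d^{i}$ with $h^{i}\colon P^{i}\to P^{i-1}$, then minimality ($\Image d\subseteq\rad$) together with $A$-linearity of $h$ forces $\Image f^{i}\subseteq\rad P^{i}$ for every $i$; hence each $1-f^{i}$ is an automorphism and $1-f$ is an isomorphism of complexes. As the null-homotopic endomorphisms form a two-sided ideal $\mathfrak n$, this shows $\mathfrak n\subseteq\rad\End_{C}(P^{\bullet})$, so $\End_{K}(P^{\bullet})=\End_{C}(P^{\bullet})/\mathfrak n$ is local iff $\End_{C}(P^{\bullet})$ is local, and likewise for $T^{\bullet}$. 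Because objects here have finite-dimensional endomorphism rings in $D^{b}(A)$, "local" is equivalent to "indecomposable", and it remains only to compare the two \emph{complex}-level endomorphism rings.

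Next I would analyse the restriction homomorphism $\rho\colon\End_{C}(P^{\bullet})\to\End_{C}(T^{\bullet})$, $\phi\mapsto(\phi^{i})_{i\ge j}$. For surjectivity, a chain endomorphism $\psi$ of $T^{\bullet}$ commutes with $d^{j}$, hence preserves $Z=\Ker d^{j}$, and the induced endomorphism of $Z$ lifts over the minimal resolution to an extension of $\psi$ to all of $P^{\bullet}$. For the kernel, if $\phi^{i}=0$ for all $i\ge j$, the chain-map condition at the junction gives $\Image\phi^{j-1}\subseteq\Ker d^{j-1}$, so $\phi$ induces the zero endomorphism of $Z$; being a lift of $0$ over a projective resolution, $\phi$ is null-homotopic, whence $\ker\rho\subseteq\mathfrak n\subseteq\rad\End_{C}(P^{\bullet})$. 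A surjection of rings with kernel inside the radical transports locality in both directions, so $\End_{C}(P^{\bullet})$ is local iff $\End_{C}(T^{\bullet})$ is local. Chaining this with the previous paragraph yields $P^{\bullet}$ indecomposable $\iff T^{\bullet}=\sigma_{\ge j}(P^{\bullet})$ indecomposable, for \emph{every} $j<-n$, which delivers the equivalence of the "for some" and "for all" formulations simultaneously.

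The main obstacle I anticipate is the interface between the homotopy category and honest complexes: one must be certain that reconstructing $P^{\bullet}$ from $T^{\bullet}$ via a minimal projective resolution is unambiguous at the level that governs idempotents. This is precisely where minimality is indispensable, since it is what renders null-homotopic endomorphisms radical-valued and thereby collapses $\End_{K}$ onto $\End_{C}$. The one place demanding genuine care, if routine, is verifying that a chain endomorphism lifting $0\colon Z\to Z$ is null-homotopic as an endomorphism of all of $P^{\bullet}$, not merely of the tail, which requires extending the contracting homotopy across degree $j$ compatibly with $\phi^{j}=0$.
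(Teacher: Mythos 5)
Your proof is correct. Note first that the paper itself gives no argument for this lemma: it is quoted from \cite[Proposition 2]{ZH16}, where the standard proof runs at the level of direct-sum decompositions --- one extends a decomposition of the truncation $\sigma_{\geq j}(P^{\bullet})$ to one of $P^{\bullet}$ by splicing on minimal projective resolutions of the two summands of $Z=\Ker d^{j}$ (using $H^{i}(P^{\bullet})=0$ for $i\leq j$, which is where $j<-n$ enters), and conversely truncates a decomposition of $P^{\bullet}$, with uniqueness of minimal complexes mediating between $C(A)$ and $K(A)$. You package exactly the same two ingredients --- minimality forcing null-homotopic maps to be radical-valued, and the tail of $P^{\bullet}$ being a minimal resolution of $Z$ --- into a single ring-theoretic statement: the restriction $\rho\colon\End_{C}(P^{\bullet})\to\End_{C}(\sigma_{\geq j}P^{\bullet})$ is surjective (comparison theorem applied to the induced endomorphism of $Z$, which is preserved since endomorphisms of the truncation commute with $d^{j}$) with $\ker\rho\subseteq\mathfrak{n}\subseteq\rad\End_{C}(P^{\bullet})$, so locality transfers both ways. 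This is a genuinely different organization with real advantages: it proves the equivalence for \emph{every} $j<-n$ at once, hence the ``some iff all'' formulation falls out immediately, and it avoids any explicit idempotent lifting. The two delicate points both check out: the kernel computation works because $\phi^{j}=0$ forces $\pi\phi^{j-1}=0$ (as $\Ker d^{j-1}=\Ker\pi$ by injectivity of $Z\hookrightarrow P^{j}$), and the homotopy for such $\phi$ extends across degree $j$ by simply taking $h^{i}=0$ for $i\geq j$, which is compatible precisely because $\phi$ vanishes there; and the passage from ``local'' to ``indecomposable'' is legitimate since $\End_{K^{-,b}}(P^{\bullet})\cong\End_{D^{b}(A)}(P^{\bullet})$ is finite-dimensional and $D^{b}(A)$ is Krull--Schmidt. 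The only cosmetic caveat is that you should remark that $\sigma_{\geq j}(P^{\bullet})\neq 0$ in $K^{b}(\proj A)$ (its cohomology in degree $-n>j$ agrees with that of $P^{\bullet}$), so that ``indecomposable'' is meaningful on both sides.
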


\medskip

Let $A$ be a finite-dimensional algebra and $P^\bullet\in K^b(\proj A)$ an
indecomposable minimal complex of the form
$$P^{\bullet} = 0 \longrightarrow P^{-n} \stackrel{d^{-n}}{\longrightarrow}
P^{-n+1} \stackrel{d^{-n+1}}{\longrightarrow} \cdots
\stackrel{d^{-2}}{\longrightarrow} P^{-1}
\stackrel{d^{-1}}{\longrightarrow} P^0 \longrightarrow 0.$$
Now we can construct a minimal object in
$D^b(A)$ by eliminating the cohomology of minimal degree. Suppose $H^{-n}(P^\bullet)
\cong \Ker d^{-n}$, we take a minimal projective resolution
of $\Ker d^{-n}$, say
$$P'^{\bullet} = \cdots \longrightarrow P^{-n-2}
\stackrel{d^{-n-2}}{\longrightarrow} P^{-n-1} \longrightarrow 0.$$
Gluing $P'^{\bullet}$ and $P^{\bullet}$ together, we get a minimal
complex
$$\beta(P^\bullet) = \cdots \longrightarrow P^{-n-2}
\stackrel{d^{-n-2}}{\longrightarrow} P^{-n-1}
\stackrel{d^{-n-1}}{\longrightarrow} P^{-n}
\stackrel{d^{-n}}{\longrightarrow} \cdots
\stackrel{d^{-1}}{\longrightarrow} P^0 \longrightarrow 0,$$ where
$d^{-n-1}$ is the composition $P^{-n-1} \twoheadrightarrow \Ker
d^{-n} \hookrightarrow P^{-n}$. Note that $H^{-n}(\beta(P^\bullet))=0$,
and $H^j(\beta(P^\bullet))=H^j(P^\bullet)$ for $j\neq -n$.

\begin{lemma}
\label{cut-coh} Keep the notations as above.
Then  $\beta(P^\bullet)$ is indecomposable.
\end{lemma}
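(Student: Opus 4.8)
The plan is to deduce indecomposability of $\beta(P^\bullet)$ from that of $P^\bullet$ by means of Lemma~\ref{prop-indec}, the point being that $P^\bullet$ is recovered from $\beta(P^\bullet)$ as a brutal truncation. Indeed, by construction $\beta(P^\bullet)$ coincides with $P^\bullet$ in every degree $\geq -n$ and differs from it only by the attached resolution $P'^\bullet$ sitting in degrees $\leq -n-1$; hence $\sigma_{\geq -n}(\beta(P^\bullet)) = P^\bullet$. So the whole argument reduces to checking that $\beta(P^\bullet)$ meets the hypotheses of Lemma~\ref{prop-indec} and that $-n$ is an admissible truncation degree for it, after which the lemma converts the assumed indecomposability of $P^\bullet$ into that of $\beta(P^\bullet)$.

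First I would record the cohomology of $\beta(P^\bullet)$. Since $d^{-n-1}$ is the composite $P^{-n-1}\twoheadrightarrow \Ker d^{-n}\hookrightarrow P^{-n}$, its image is exactly $\Ker d^{-n}$, so $H^{-n}(\beta(P^\bullet))=\Ker d^{-n}/\Image d^{-n-1}=0$; and as $P'^\bullet$ is a genuine (minimal) resolution of $\Ker d^{-n}$, the complex is exact in all degrees $<-n$ as well. Together with $H^j(\beta(P^\bullet))=H^j(P^\bullet)$ for $j>-n$, this shows that $-m:=\min\{i\mid H^i(\beta(P^\bullet))\neq 0\}$ satisfies $-m>-n$, so that the truncation degree $-n$ lies strictly below $-m$ and is therefore admissible in Lemma~\ref{prop-indec}. (Here I assume the nondegenerate situation in which $P^\bullet$ has cohomology in some degree $>-n$, so that $\beta(P^\bullet)\neq 0$; otherwise $\beta(P^\bullet)$ is acyclic and the statement is vacuous.) I would also confirm that $\beta(P^\bullet)$ is a minimal complex in $K^{-,b}(\proj A)$, as required by the lemma: the differentials inherited from the minimal resolution $P'^\bullet$ have image in the radical, and the only new differential $d^{-n-1}$ has image $\Ker d^{-n}$, so minimality amounts to $\Ker d^{-n}\subseteq \rad P^{-n}$. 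The latter is precisely what the indecomposability of $P^\bullet$ secures: were $\Ker d^{-n}\not\subseteq\rad P^{-n}$, then — since $d^{-n}$ is a radical map by minimality of $P^\bullet$ — a projective direct summand of $P^{-n}$ would lie in $\Ker d^{-n}$ and, having no incoming differential (as $P^\bullet$ vanishes in degrees $<-n$), would split off as a stalk summand in degree $-n$, contradicting indecomposability.

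With these two facts in place, the conclusion follows at once: applying Lemma~\ref{prop-indec} to the minimal complex $\beta(P^\bullet)$ with the admissible truncation degree $j=-n$ gives that $\beta(P^\bullet)$ is indecomposable if and only if $\sigma_{\geq -n}(\beta(P^\bullet))=P^\bullet$ is, and $P^\bullet$ is indecomposable by hypothesis. I expect the only genuinely delicate step to be the minimality verification — specifically, ruling out a projective stalk summand in the bottom degree $-n$ — since the identification $\sigma_{\geq -n}(\beta(P^\bullet))=P^\bullet$ and the cohomology bookkeeping are immediate from the construction, and the rest is a direct invocation of Lemma~\ref{prop-indec}.
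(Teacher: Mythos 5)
Your proof is correct and takes essentially the same route as the paper's: observe that $P^\bullet=\sigma_{\geq -n}(\beta(P^\bullet))$, that $H^i(\beta(P^\bullet))=0$ for all $i\leq -n$, and then invoke Lemma~\ref{prop-indec} to transfer indecomposability from the truncation back to $\beta(P^\bullet)$. Your extra verifications --- that $\beta(P^\bullet)$ is genuinely minimal (i.e.\ $\Ker d^{-n}\subseteq \rad P^{-n}$, ruled out otherwise by splitting off a projective stalk summand) and the degenerate case where $P^\bullet$ is a projective stalk so that $\beta(P^\bullet)$ is acyclic --- are points the paper asserts or ignores in its construction, and both check out.
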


\begin{proof}
If $H^{-n}(P^\bullet)=0$, then $\beta(P^\bullet)=P^\bullet$ and the statement follows.
Now suppose $H^{-n}(P^\bullet)\neq 0$.
Since $P^{\bullet} $ is the brutal truncation $\sigma_{\geq
-n}(\beta(P^{\bullet}))$, which is indecomposable and $H^i(\beta(P^{\bullet})) =0$
for all $i \leq -n$,
$\beta(P^{\bullet})$ is indecomposable by Lemma \ref{prop-indec}.
\end{proof}

The following theorem from \cite[Theorem 3]{BM03} provides an explicit description of
the indecomposables in the bounded derived category $D^b(A)$.

\begin{theorem}
\label{thm-const} Let $A=kQ/I$ be a gentle algebra with $[-1]$ the shift functor in
$D^b(A)$. Then the set of indecomposable objects in $K^b(\proj A)$ is
$$\{P_w^\bullet[i]\;|\; w\in {\bf Gst}, i\in \mathbb{Z}\} \cup
\{P_{w,\lambda}^\bullet[i]\;|\; w\in {\bf Gba},  \; \lambda \in k^*, \; d>0, i\in \mathbb{Z}\}.$$
Moreover, the indecomposables in $K^{-,b}(\proj A)\setminus K^b(\proj A)$ is of the form
$\beta(P_w^\bullet)$ for $w\in {\bf Gst}$ with certain conditions.
\end{theorem}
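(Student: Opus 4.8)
The plan is to follow the matrix-problem reduction that underlies Bekkert and Merklen's argument. An object of $K^b(\proj A)$ is a bounded complex $(P^\bullet, d^\bullet)$ in which each $P^i$ is a direct sum of indecomposable projectives $P_v = e_vA$, and each differential $d^i$ is a matrix whose entries lie in $\Hom_A(P_v, P_{v'}) \cong e_{v'}(kQ/I)e_v$, that is, $k$-linear combinations of paths. First I would replace the complex by a minimal representative, so that every entry of each $d^i$ lies in the radical and hence is a combination of paths of length $\geq 1$; this is harmless in the homotopy category, since each object of $K^b(\proj A)$ has a minimal model unique up to isomorphism, and it is exactly the setting in which Definitions~1 and~2 are written.

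The key step is to encode this data as a representation of a suitable bunch of (semi)chains, equivalently a clan in the sense of Crawley-Boevey and Bondarenko, attached to $(Q,I)$. The gentle conditions (1)--(4) are precisely what is needed for this: at each vertex there are at most two incoming and two outgoing arrows, and the length-two zero relations control exactly which compositions of consecutive paths vanish, so that the Hom-spaces between indecomposable projectives together with the square-zero condition $d^2=0$ assemble into a matrix problem whose underlying poset is a disjoint union of chains. Here lies the main technical work, and the step I expect to be the principal obstacle: one must verify that the bookkeeping of paths in ${\bf Pa}_{\geq 1}$, their formal inverses, the sign function $\mu_w$, and the relations (1)--(3) defining ${\bf\overline{Gst}}$ match the admissible combinatorial data of the matrix problem exactly, and conversely that every indecomposable representation of the bunch of chains lifts to a complex of the form $P_w^\bullet$ or $P_{w,\lambda}^\bullet$ with the prescribed differentials.

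Once this dictionary is established, I would invoke the classical classification of indecomposable representations of a bunch of chains, which splits into two families: \emph{string} representations, giving the complexes $P_w^\bullet$ for $w\in{\bf Gst}$, and \emph{band} representations, parametrized by a band $w\in{\bf Gba}$ together with an eigenvalue $\lambda\in k^*$ and a Jordan-block size $d>0$, giving $P_{w,\lambda}^\bullet$, the block $J_{\lambda,d}$ entering precisely as in the construction of $P_{w,\lambda}^\bullet$. I would also check that the identifications $w\sim w^{-1}$ and the cyclic rotation $w_1\cdots w_n\sim w_2\cdots w_nw_1$ built into ${\bf Gst}$ and ${\bf Gba}$ are exactly the redundancies of the matrix problem, so that distinct representatives yield pairwise non-isomorphic indecomposables and the displayed list is without repetition; the shift $[i]$ then accounts for the remaining freedom.

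For the \emph{moreover} part concerning $K^{-,b}(\proj A)\setminus K^b(\proj A)$, I would use the standard equivalence $D^b(A)\simeq K^{-,b}(\proj A)$ together with the minimal-model description preceding Lemma~\ref{cut-coh}. An indecomposable minimal complex in $K^{-,b}(\proj A)$ that is genuinely unbounded to the left must, below its lowest nonzero cohomology degree $-n$, coincide with a minimal projective resolution of $\Ker d^{-n}$. By Lemma~\ref{prop-indec} its brutal truncations $\sigma_{\geq j}$ are indecomposable objects of $K^b(\proj A)$, hence of the form $P_w^\bullet[i]$; band complexes cannot occur as such left tails, since their periodic shape would force nonvanishing cohomology in arbitrarily low degrees and hence boundedness fails in the wrong way. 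Reversing the gluing construction $\beta(-)$ of Lemma~\ref{cut-coh} then exhibits the complex as $\beta(P_w^\bullet)$ for a suitable generalized string $w$, the ``certain conditions'' singling out exactly those $w$ whose minimal-degree cohomology is nonzero and of infinite projective dimension, so that the appended resolution does not terminate.
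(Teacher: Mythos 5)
There is nothing in the paper to compare your argument against: the paper does not prove Theorem \ref{thm-const} at all, but imports it verbatim from Bekkert and Merklen \cite[Theorem 3]{BM03}. Your outline is in fact a faithful reconstruction of the architecture of that cited proof --- pass to minimal complexes, encode bounded complexes of projectives over a gentle algebra as a matrix problem of bunch-of-chains/clan type in the sense of Bondarenko and Crawley-Boevey, invoke the string/band classification of indecomposables with the Jordan block $J_{\lambda,d}$ entering through band representations, and check that $w\sim w^{-1}$ and cyclic rotation are exactly the redundancies of the matrix problem. As you yourself flag, the entire mathematical content lies in the dictionary you defer (that the data $\mu_w$, ${\bf Pa}_{\geq 1}$, and conditions (1)--(3) defining ${\bf\overline{Gst}}$ match the admissible data of the matrix problem), so as a standalone proof this is a plan rather than a proof; but it is the right plan, and it is the one the cited source executes.

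One step of your ``moreover'' part is genuinely wrong as stated: you exclude band complexes as left tails ``since their periodic shape would force nonvanishing cohomology in arbitrarily low degrees.'' Band complexes $P_{w,\lambda}^\bullet$ are bounded perfect complexes with bounded cohomology, so no such unboundedness occurs. The claim you need --- that every brutal truncation $\sigma_{\geq j}(P^\bullet)$ of a minimal indecomposable object of $K^{-,b}(\proj A)\setminus K^b(\proj A)$ is a \emph{string} complex --- is true but requires an actual argument: note also that truncating a band complex at its lowest degree can perfectly well yield an indecomposable complex (a string complex, when $d=1$), so one cannot argue by truncating the band either; what must be shown is that a band complex admits no minimal one-step extension to the left that remains indecomposable, equivalently that the coherent family of indecomposable truncations $\{\sigma_{\geq j}(P^\bullet)\}_{j\ll 0}$ supplied by Lemma \ref{prop-indec} is forced to consist of string complexes whose words stabilize into the minimal projective resolution of $\Ker d^{-n}$, whence $P^\bullet\cong\beta(P_w^\bullet)$ as in Lemma \ref{cut-coh}. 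This analysis is again carried out in \cite{BM03}; your sketch asserts its conclusion with an invalid justification.
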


\medskip

\section{The question I for gentle algebras}

In this section, we will discuss the cohomological lengths of the indecomposables in the bounded derived category
of gentle algebras. Indeed, we prove the following theorem.

\begin{theorem}
\label{thm-no-gap}
Let $A$ be a gentle algebra. If there is
an indecomposable object in $D^b(A)$ of cohomological length $l>1$, then there exists an indecomposable
with cohomological length $l-1$.
\end{theorem}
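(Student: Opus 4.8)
The plan is to translate the problem entirely into the combinatorics of generalized strings and bands via Theorem \ref{thm-const}, and then to lower the cohomological length by one through an explicit local surgery on the underlying walk. Since the shift functor preserves $\hl$, every indecomposable of $D^b(A)\simeq K^{-,b}(\proj A)$ is, up to shift, one of $P_w^\bullet$ with $w\in{\bf Gst}$, a band complex $P_{w,\lambda}^\bullet$ with $w\in{\bf Gba}$, or a glued complex $\beta(P_w^\bullet)$. First I would record that $\hl$ is computable from the walk: writing $P_w^i=\bigoplus_{\mu_w(j)=i}P_{c(j)}$, each indecomposable summand sits at height $\mu_w(j)$ and is joined to its neighbours by the path-multiplication maps $P(w_j)$, $P(w_{j+1}^{-1})$, so the walk $\mu_w$ is a zigzag whose local maxima (``peaks''), minima (``valleys'') and slopes each contribute a cokernel, a kernel, or a subquotient of the incident maps to $H^i$. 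The first real step is to make this precise: using the gentle hypotheses (so that two paths leaving a vertex begin with distinct arrows and the relevant kernels and cokernels of left multiplication are explicit), I would express $\dim H^i(P_w^\bullet)$ as a sum of such local contributions, and thereby obtain a purely combinatorial formula for $\hl$.

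With this description in hand, I would prove the key statement for string complexes: if $\hl(P_w^\bullet)=l>1$, then some generalized string $w'$ has $\hl(P_{w'}^\bullet)=l-1$. Fix a degree $i_0$ with $\dim H^{i_0}(P_w^\bullet)=l$ together with a summand contributing to it. The surgery alters $w$ near that summand---shortening or lengthening an incident path by one arrow, or truncating a contributing peak or valley that lies at an end of the walk---so as to delete exactly one dimension from the local contribution at $i_0$. At each step one must verify three things: that the modified walk is again a generalized string (the defining conditions are local, hence preserved provided the arrow is added or removed at the correct end and no relation is created or destroyed), that the associated complex is therefore indecomposable (automatic for string complexes by Theorem \ref{thm-const}), and---this is the crux---that the net change in $\hl$ is exactly $-1$ rather than $0$ or $\le -2$. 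The last point forces one to track the simultaneous effect on the two cohomology degrees adjacent to the modified summand and to argue that the gentle combinatorics confines the change to a single unit at $i_0$ while keeping every other degree at most $l-1$.

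Bands and glued complexes I would dispatch by comparison with strings. For a band complex the Jordan blocks $J_{\lambda,d}$ are invertible, so they do not affect the ranks of the differentials; consequently the cohomology dimensions for parameter $d$ are $d$ times those for $d=1$. Moreover the powers $w^k$ of a band are again generalized strings (by the band axioms), and their string complexes have cohomological length growing linearly in $k$. Hence as soon as $A$ admits a band, the set of cohomological lengths realized by string complexes is unbounded, and since the surgery above makes that set downward closed, it must equal all of $\mathbb{Z}_{\ge 1}$; this settles every $l$ at once. In the remaining (derived discrete) case there are no bands, and for a glued complex $\beta(P_w^\bullet)$ one uses that $\beta$ only annihilates the bottom cohomology and appends acyclic data, so its nonzero cohomologies are exactly the higher cohomologies of $P_w^\bullet$ and the same surgery on $w$ applies.

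I expect the main obstacle to be precisely the exact-by-one control in the second paragraph. A local change to the walk generically perturbs the cohomology in two adjacent degrees simultaneously, so ensuring that the resulting change in $\hl$ is neither $0$ nor at least $2$---especially when the maximum $l$ is attained at several peaks at once, or arises from a single peak carrying a high-dimensional cokernel---will require a careful case-by-case bookkeeping of the kernels and cokernels of path multiplications, together with a judicious choice of which summand and which arrow to modify.
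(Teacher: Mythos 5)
Your proposal is correct and, for strings and glued complexes, follows essentially the paper's route: reduce via Theorem \ref{thm-const} to the classes $P_w^\bullet$, $\beta(P_w^\bullet)$, $P_{w,\lambda}^\bullet$; compute $\dim H^i$ by local contributions along the walk (the paper's formulas $\dim H^0(P_w^\bullet)=l(w_1)+l(\check{w_1})$, $\dim H^{-i}(P_w^\bullet)=l(w_{i+1})-1$, and the backward-turning-point contribution $l(w_{i+1})+l(w_i^{-1})-1$ are exactly your peak/valley/slope bookkeeping, with forward turning points contributing $0$); and do the exact-by-one surgery by truncating one arrow ($\kappa^+_1$, $\kappa^+_2$, $\kappa^-_1$) at a judiciously chosen summand $Q$ (first in its component, at a degree realizing $l$, closest to the start). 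One caution on your paragraph two: the paper's mechanism for exact-by-one control is to glue a whole walk $\overline{p}^{-1}$ of consecutive relations when a second maximal path leaves the new endpoint, and when $\overline{p}$ is infinite the result is a $\beta$-type complex whose indecomposability rests on Lemmas \ref{prop-indec} and \ref{cut-coh}, not on Theorem \ref{thm-const} alone, so ``automatic for string complexes'' slightly undersells what must be checked. The genuine divergence is the band case: the paper proves the sharper identity $\hl(\beta(P_{w'}^\bullet))=\hl(P_{w,\lambda}^\bullet)$ for $w'=(w_1\cdots w_n)^d$, degreewise via $\dim H^i(P_{w,\lambda}^\bullet)=d\cdot\dim H^i(P_w^\bullet)=\dim H^i(\beta(P_{w'}^\bullet))$ for $i\neq 0$ and vanishing at $i=0$, thereby reducing a band of cohomological length $l$ directly to the already-settled string/glued case for that same $l$; you instead argue that the presence of any band makes string lengths unbounded and then invoke downward closure to realize every length at once. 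Your variant is sound --- $w^k\in{\bf Gst}$ since the defining conditions are checked on $w^2$, the condition $\mu_w(n)=0$ makes the height function of $w^k$ periodic, and each period contains a backward turning point contributing at least $1$ at a fixed degree, giving $\hl(P_{w^k}^\bullet)\geq k-1$ --- and it buys a cheaper, case-free finish for bands (it even makes the glued case redundant when a band exists), at the price of a coarser conclusion: the paper's equality matches the band's cohomology degree by degree to a concrete string complex, while your growth estimate only shows all lengths occur, which is all the theorem needs.
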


Before the proof, we need some preparations. First, we recall
the definitions of some numerical invariants for finite-dimensional algebras introduced in \cite{ZH16}.

\begin{definition} Let $A$ be a finite-dimensional algebra with $D^b(A)$ the bounded derived category.
The {\it cohomological length} of a complex $X^{\bullet} \in
D^b(A)$ is $$\hl(X^{\bullet}) := \max\{\dim H^i(X^{\bullet}) \; | \;
i \in \mathbb{Z}\}.$$\end{definition}

As well known, there is a
full embedding of $\mod A$ into $D^b(A)$ which sends an $A$-module $M$ to the
corresponding stalk complex and the
cohomological length of the stalk complex $M$ equals to
dimension of $M$. If $A$
is representation-infinite, i.e., there exist indecomposable
$A$-modules of arbitrary large dimensions,
then the {\it global cohomological length} of $A$
$$\mbox{\rm gl.hl} A := \sup\{\hl(X^{\bullet}) \; | \; X^{\bullet} \in D^b(A)
\mbox{ is indecomposable}\}$$ is infinite.
Moreover, by the Bongartz and Ringel's theorem, Theorem \ref{thm-no-gap} also holds for
representation-infinite algebras since the
Brauer-Trall conjecture I holds in this case \cite{Aus74, Ro68}.

\begin{definition}{\rm The {\it cohomological width} of a complex $X^{\bullet} \in D^b(A)$
is
$$\hw(X^{\bullet}) := \max\{j-i+1 \; | \; H^i(X^{\bullet}) \neq 0 \neq H^j(X^{\bullet})\},$$
and the {\it cohomological range} of $X^{\bullet}$ is
$$\hr(X^{\bullet}) := \hl(X^{\bullet}) \cdot \hw(X^{\bullet}).$$ }\end{definition}

Since the cohomological width of a stalk complex is one, the cohomological range of a
stalk complex is precisely the cohomological length. Thus, there is also no gaps
in the sequence of cohomological ranges of indecomposable objects in $D^b(A)$ if
$A$ is representation-infinite. Moreover, the cohomological length, width and range
are invariant under shifts and isomorphisms.

\medskip

Let $A$ be a gentle algebra.
By Theorem \ref{thm-const}, any indecomposable complex
$P^\bullet\in D^b(A)$ is of the form $P_w^\bullet$
determined by a generalized string $w$, or of the form
$\beta(P_w^\bullet)$ for some generalized string $w$, or of the form
$P^\bullet=P^\bullet_{w,\lambda}$ determined by a generalized band $w$.
Thus we divide the proof of Theorem \ref{thm-no-gap} into three theorems as follows and
their proofs depend strongly on the description of the indecomposables in the
bounded derived category of gentle algebras due to Bekkert and Merklen \cite{BM03}.

We should recall more notations for a gentle algebras $A=kQ/I$ from \cite{BM03, Bo11},
some of which are slightly different for our convenience.
For any $p\in {\bf Pa_{\geq 1}}$, there is a unique maximal path $\tilde{p}=p\hat{p}$
starting with $p$. Besides the path $\tilde{p}$,
there may be another maximal path, say $\check{p}$,
beginning with the starting point $s(p)$ of $p$. If this is not the case, we write $l(\check{p})=0$.
For any walk $p=p_1p_2\cdots p_l$ and any $j<l$, we write $\kappa^{+}_j(p)=p_{j+1}p_{j+2}\cdots p_l$ for the walk
truncating the first $j$ arrows from the path $p$ along the positive direction. Similarly,
we write $\kappa^{-}_j(p)=p_{1}p_{2}\cdots p_{l-j}$ for the walk
truncating the last $j$ arrows from path $p$ along the negative direction.
Moreover, for a path $\alpha$, we denote by $\overline{\alpha}$ the generalized string $\alpha\alpha_1\alpha_2\cdots$ of
maximal width with $\alpha_i\in Q_1$. Note that $\alpha\alpha_1\in I$, $\alpha_i\alpha_{i+1}\in I$ for $i\geq 1$, and
$\overline{\alpha}=\alpha$ if there is no such arrow $\alpha_1$ that $\alpha\alpha_1\in I$.

Now we are ready for the following theorem.

\begin{theorem} Let $A$ be a gentle algebra. If there is an indecomposable
$P_w^\bullet\in K^b(\proj A)$ determined by a generalized string $w$ such that $\hl(P^\bullet)=l>1$, then
there is an indecomposable $P'^\bullet\in D^b(A)$ with $\hl(P'^\bullet)=l-1$.
\end{theorem}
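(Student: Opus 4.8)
We have an indecomposable complex $P_w^\bullet \in K^b(\proj A)$ coming from a generalized string $w = w_1w_2\cdots w_n$, with cohomological length $l > 1$. We must produce some indecomposable in $D^b(A)$ whose cohomological length is exactly $l-1$. The natural strategy is to modify the string $w$ itself—shortening or trimming it in a controlled way—to obtain a new generalized string $w'$ whose associated complex $P_{w'}^\bullet$ (or its glued version $\beta(P_{w'}^\bullet)$) has cohomological length dropping by exactly one.

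Let me think about how the cohomology of $P_w^\bullet$ is controlled by the combinatorics of $w$.

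**Understanding the cohomology.** First I need to understand where the cohomology lives and how $\dim H^i$ is computed from the string data. From Definition 1, the component $P_w^i = \bigoplus_{j: \mu_w(j)=i} P_{c(j)}$, and the differentials are built from the maps $P(w_j)$ induced by paths. So the function $\mu_w$ (a lattice-path/height function that goes down on a direct letter $w_j \in \mathbf{Pa}_{\geq 1}$ and up on an inverse letter) determines the shape of the complex, and each "peak" or local configuration of $\mu_w$ contributes to cohomology in a specific degree.

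The cohomological length $l = \max_i \dim H^i(P_w^\bullet)$. So there's some degree $i_0$ achieving this maximum, $\dim H^{i_0} = l$. I need to understand the dimension formula: $H^{i_0}$ is the homology of $P_w^{i_0-1} \to P_w^{i_0} \to P_w^{i_0+1}$. Since the differentials are concrete matrices of path-multiplications, $\dim H^{i_0}$ should decompose as a sum of local contributions, one for each index $j$ with $\mu_w(j) = i_0$. Each such $j$ corresponds to a summand $P_{c(j)}$ sitting in degree $i_0$, and its contribution to $H^{i_0}$ is the dimension of (kernel of outgoing)/(image of incoming) restricted to that summand—essentially governed by the maximal paths $\tilde{p}, \check{p}$ and the truncation operations $\kappa^\pm$ introduced just before the theorem.

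**The plan.**

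First, I would compute explicitly the contribution of each summand $P_{c(j)}$ (with $\mu_w(j) = i_0$) to $\dim H^{i_0}(P_w^\bullet)$ in terms of the local string data—the adjacent letters $w_j, w_{j+1}$ and the maximal paths $\tilde p$, $\check p$. The notations $\overline\alpha$, $\hat p$, $\check p$, $\kappa^\pm_j$ set up before the theorem are clearly introduced precisely to write down these local contributions. This gives a combinatorial formula $\dim H^{i_0} = \sum_j (\text{local term}_j)$.

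Second, I would choose a single summand/position $j_0$ contributing to the top cohomology and perform a minimal surgery on the string $w$ at that position—shortening one of the constituent paths $w_{j_0}$ by one arrow via $\kappa^+_1$ or $\kappa^-_1$, or trimming the maximal-path decoration $\overline\alpha$—so that the local contribution drops by exactly one while all other positions' contributions are unchanged. The point is that because the string is a one-dimensional (walk) object, a local modification at one position affects the cohomology only near that position, and the universal gentle-algebra structure (each arrow has bounded in/out degree) makes the effect of a one-arrow change quantifiable. I would verify that the modified walk $w'$ is still a legal generalized string (conditions (1)–(3) defining $\mathbf{\overline{Gst}}$ are preserved under trimming, since shortening a path or removing a maximal-path tail cannot create a forbidden subword).

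Third, I would confirm that $\dim H^{i_0}(P_{w'}^\bullet) = l-1$ while $\dim H^i(P_{w'}^\bullet) \leq l$ for all $i$ (no other degree jumps up past $l$), so that $\hl(P_{w'}^\bullet) = l-1$. Here one must rule out the degenerate possibility that trimming pushes cohomology out of $K^b(\proj A)$ or kills indecomposability; if the trimmed object falls into $K^{-,b}(\proj A)$, then by Theorem 5 and Lemma 2 the glued complex $\beta(P_{w'}^\bullet)$ is still indecomposable with the same cohomology in the relevant degrees, so it serves as the required $P'^\bullet$.

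**The main obstacle.** The hard part will be the second step: arranging a local modification that decreases the top cohomology by \emph{exactly} one rather than by zero or by more than one. A single peak of $\mu_w$ may contribute a block of dimension larger than one to $H^{i_0}$ (the local contribution is a length of a maximal path, not necessarily $1$), and naively shortening a path could drop that block by several dimensions at once, or could simultaneously alter a neighboring position's contribution, or could even split/merge cohomology across degrees. Controlling the surgery so that the net change is precisely $-1$—and simultaneously guaranteeing that no \emph{other} cohomology degree rises to $l$ and that the result remains a single indecomposable—is where the delicate case analysis (on whether $w_{j_0}$ is direct or inverse, whether $\check p$ exists, and the relative lengths of $\tilde p$ and $\check p$) will be concentrated. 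I expect the proof to reduce to a finite number of local configurations at a top-contributing position, with a distinct trimming recipe verified for each.
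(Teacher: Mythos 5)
Your overall strategy coincides with the paper's: compute the contribution of each projective summand $P_{c(j)}$ to cohomology from the string combinatorics, select a summand realizing the maximum, perform a truncation $\kappa^{\pm}$ possibly followed by gluing a walk of the form $\overline{p}$ or $\overline{p}^{-1}$, and fall back on $\beta(\cdot)$ together with Lemma 2 when the glued object leaves $K^b(\proj A)$. However, your proposal stops exactly where the content of the proof begins: you yourself identify as ``the main obstacle'' the problem of making the top cohomology drop by precisely one while no other degree rises, and you leave it unresolved, merely predicting a case analysis. Since the entire theorem consists of that case analysis, this is a genuine gap, and moreover your default recipe (``shortening one of the constituent paths $w_{j_0}$ by one arrow'') is actually wrong at interior positions.

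Concretely, the contributions are not uniform along the string, and this asymmetry dictates different surgeries at different positions. For a one-sided string $w=w_1\cdots w_n$ one has $\dim H^0(P_w^\bullet)=l(w_1)+l(\check{w_1})$, but $\dim H^{-i}(P_w^\bullet)=l(w_{i+1})-1$ for $1\leq i\leq n-1$, and $\dim H^{-n}(P_w^\bullet)=l(\tilde{\alpha})$ where $\alpha$ is the arrow (if any) with $w_n\alpha\in I$. When you cut at an interior position $i$ and discard $w_1\cdots w_i$, that position becomes the degree-zero end of the new complex, where the formula changes from $l(\cdot)-1$ to $l(\cdot)+l(\check{\cdot})$; hence the correct truncation there is $\kappa^+_2$ (two arrows), since a one-arrow trim leaves the maximum unchanged when $l(\check{\cdot})=0$. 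The residual term $l(\check{\cdot})$ is then killed by gluing $\overline{p}^{-1}$ for the competing arrow $p$: the computation labelled $(*)$ in the paper shows this gluing adds exactly $l(p)-1=0$, and when $\overline{p}$ is infinite the result is $\beta(P_u^\bullet)$ for a truncated string $u$, hence still indecomposable. In the two-sided case one must in addition choose the summand $Q$ as the \emph{first} summand of its component, in a degree achieving $\hl$, closest to the starting point, and observe that a forward turning point contributes zero to cohomology (so $Q$ is never one), while a backward turning point contributes $l(w_{i+1})+l(w_i^{-1})-1$ and admits the same truncation-plus-gluing recipe; choosing the extremal index achieving $\hl$ guarantees that all degrees beyond the cut had strictly smaller cohomology, so nothing rises to $l$. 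None of these facts appear in your proposal, and without them the plan cannot be verified.
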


\begin{proof}

We shall divide the proof into two cases.

\medskip

{\bf Case 1:}  Let $w=w_1w_2\cdots w_n$ be a one-sided generalized string, i.e. $w_i\in {\bf Pa_{\geq 1}}$
for all $1\leq i\leq n$, or $w_i^{-1}\in {\bf Pa_{\geq 1}}$ for all $1\leq i\leq n$. Without
loss of generality, we assume $w_i\in {\bf Pa_{\geq 1}}$ for all $1\leq i\leq n$ (Otherwise, we
can consider the generalized string $w^{-1}$, and they determine the same complex).
Let $P^\bullet$ be the complex determined by $w$ of the form
$$P_w^{\bullet}=\xymatrix{ 0 \ar [r]&
P_{t(w_n)} \ar [r]^{P(w_n)}& P_{t(w_{n-1})} \ar [r]^{P(w_{n-1})}
&\cdots \ar [r]^{P(w_2)}&P_{t(w_1)}\ar [r]^{P(w_1)} &P_{s(w_1)} \ar [r]&0, }$$
where $P_{s(w_1)}$ lies in the $0$-th component.  Thus,
$$\begin{array}{rcl} \dim H^0(P_w^\bullet)
&=&\dim P_{s(w_1)}-\dim \Image P(w_1)\\
&=&\dim P_{s(w_1)}-\dim w_1P_{t(w_1)}\\
&=&\big(l(\widetilde{w_1})+l(\check{w_1})+1\big)-\big(l(\widehat{w_1})+1\big)\\
&=&l(w_1)+l(\check{w_1}).\end{array}$$
For any $1\leq i\leq n-1$,
$$\begin{array}{rcl} \dim H^{-i}(P_w^\bullet)&=&\dim \Ker P(w_{i})-\dim \Image P(w_{i+1}) \\
&=&l(\widetilde{w_{i+1}})-\big(l(\widehat{w_{i+1}})+1\big)\\
&=& l(w_{i+1})-1.
\end{array}$$
Similarly, $$\begin{array}{rcl}\dim H^{-n}(P_w^\bullet)&=&\dim \Ker P(w_n)=\#\{p\in{\bf Pa_{\geq 1}}\;|\; w_np=0\}\\
&=&\left\{\begin{array}{ll} 0, & \mbox{if there is no arrows }\alpha \mbox{ such that } w_n\alpha=0;
\\ l(\tilde{\alpha}), & \mbox{if there is an arrow }\alpha \mbox{ such that } w_n\alpha=0.
 \end{array}\right.
\end{array}$$

%Now we can provide a method to induce the dimension of $0$-th cohomology by one in
%terms of generalized strings. Take the generalized string $w$ as above.

Now we suppose $$i=\max\{j\;|\; \dim H^{-j}(P^\bullet_w)=\hl(P^\bullet_w); 0\leq j\leq n\}.$$
We consider the possible values of $i$ in each case.

{\rm (1)} \; If $i=0$, then $\dim H^j(P^\bullet_w)<\dim H^0(P_w^\bullet)$ for
any $j\neq 0$. Now we want to obtain a generalized string which determines a projective
complex whose cohomological length equals to $\dim H^0(P_w^\bullet)-1=l(w_1)+l(\check{w_1})-1$.

If $l(\check{w_1})=0$, namely, $\tilde{w_1}$ is the unique maximal
path starting from $s(w_1)$, then we get a generalized string
$w'=\kappa^+_1(w_1)w_2\cdots w_n$ by the truncating from positive direction.
Now if there is a unique maximal path beginning with $s(w')=s(\kappa^+_1(w_1))$, then
$$\dim H^0(P_{w'}^\bullet)=l(\kappa^+_1(w_1))=l(w_1)-1=\dim H^0(P_w^\bullet)-1,$$
and the cohomologies of
other degrees remain unchanged. Thus $P'^\bullet=P_{w'}^\bullet$ is as required with
$\hl(P'^\bullet)=l-1$. If there is another arrow $p$ starting from $s(w')$ besides $w'$,
then we set $w''=\overline{p}^{-1}\kappa^+_1(w_1)w_2\cdots w_n$.
Indeed,
the complex $P_{w''}^\bullet$ determined by $w''$ can be illustrated as follows
{\tiny $$\xymatrix{ 0 \ar [r]&
P_{t(w_n)} \ar [r]^{P(w_n)}& P_{t(w_{n-1})} \ar [r]^{P(w_{n-1})}
&\cdots \ar [r]^{P(w_2)}&P_{t(w_1)}\ar [r]^{P(\kappa^+_1(w_1))} &P_{s(\kappa^+_1(w_1))} \ar [r] &0  \\
& & &\cdots  \ar [r]^{P(p_1)}&P_{t(p)}\ar [ur]_{P(p)} & &
}$$} with $P_{s(\kappa^+_1(w_1))}$ on the $0$-th component.
Now we calculate the dimension of cohomologies of $P_{w''}^\bullet$.
$$\begin{array}{rcl} \dim H^0(P_{w''}^\bullet)&=&\dim P_{s(\kappa^+_1(w_1))}
-\dim \Image \big(P(\kappa^+_1(w_1)), P(p)\big) \\
&=&l(\widetilde{\kappa^+_1(w_1)})+l(\widetilde{p})+1-\big(l(\widehat{\kappa^+_1(w_1)})+1\big)-\big(l(\widehat{p})+1\big)\\
&=& l(\kappa^+_1(w_1))+l(p)-1=l(\kappa^+_1(w_1))\\
&=& l(w_1)-1=\dim H^0(P_w^\bullet)-1.
\end{array}  \quad (*)$$ Moreover, the cohomologies of
other degrees remain unchanged since $p_i\in Q_1$. Note that if $\overline{p}^{-1}$ is a walk of
infinite length, then $P_{w''}^\bullet$ is of the form $\beta(P_{u}^\bullet)$, where
$u$ is a generalized string obtained by truncation of $w''$ at certain position. So
$P_{w''}^\bullet$ is indecomposable. Thus $P'^\bullet=P_{w''}^\bullet$ is as required with
$\hl(P'^\bullet)=l-1$.

If $l(\check{w_1})=a>0$, then we set $w'=\overline{\check{w_1}}^{-1}w_1w_2\cdots w_n$.
By the calculation as in the equations $(*)$, $\dim H^0(P_{w'}^\bullet)=l(w_1)+l(\check{w_1})-1=\dim H^0(P_w^\bullet)-1,$
and the cohomologies of other degrees remain unchanged. Thus $P'^\bullet=P_{w''}^\bullet$ is the complex as required.

{\rm (2)} \; If $1\leq i\leq n-1$, since $\dim H^i(P^\bullet_w)=l(w_{i+1})-1=\hl(P^\bullet_w)$,
we only need to consider the case $l(w_{i+1})>2$. We set the generalized string
$w'=\kappa^+_2(w_{i+1})w_{i+2}\cdots w_n$ obtained by truncating from the positive direction.
Similar with the discussion in the case (1), if $\kappa^+_2(w_{i+1})$ is the unique maximal path
beginning with $s(\kappa^+_2(w_{i+1}))$,
then $w'$ determines an indecomposable $P_{w'}^\bullet$ such that
$$\begin{array}{rcl}\dim H^{-i}(P^\bullet_{w'}[-i])
&=&\dim H^0(P^\bullet_{w'})=l(\kappa^+_2(w_{i+1}))\\
&=&l(w_{i+1})-2=\dim H^{-i}(P_w^\bullet)-1=\hl(P_w^\bullet)-1,
\end{array}$$
and $\dim H^{-j}(P^\bullet_{w'}[-i])=0$ for any $j< i$,
$\dim H^{-j}(P^\bullet_{w'}[-i])\leq \dim H^{-j}(P_w^\bullet)<\dim H^{-i}(P_w^\bullet)$ for any $j> i$.
So $P_{w'}^\bullet[-i]$ is the complex as required in this case. If there is another arrow $p$ beginning with
$s(\kappa^+_2(w_{i+1}))$, then we set $w''=\overline{p}^{-1}\kappa^+_2(w_{i+1})w_{i+2}\cdots w_n$.
By a similar calculation as in Case (1), $P'^\bullet=P_{w''}^\bullet$
satisfies $\hl(P'^\bullet)=\hl(P_w^\bullet)-1$.

{\rm (3)} \; Finally, for the case
$i=n$, if there is no arrow $\alpha$ such that $w_n\alpha=0$, then $\hl(P_w^\bullet)=0$, which is impossible.
Let $\alpha$ be such an arrow that $w_n\alpha=0$ and $l(\tilde{\alpha})>1$,
then we choose the generalized string $w'=\kappa_1^+(\tilde{\alpha})$.
With a similar discussion as above, if there is a unique path beginning with
$s(w')$, then $w'$ determines the indecomposable object $P_{w'}^\bullet$.
Set the indecomposable object $P'^\bullet=\beta(P_{w'}^\bullet)$, then we have
$\dim H^{-n}(P'^\bullet[-n])=\dim H^{0}(P_{w'}^\bullet)=l(\tilde{\alpha})-1
=\dim H^{-n}(P^\bullet_w)-1=\hl(P_w^\bullet)$, and the cohomologies of
other degrees vanish. Therefore, $\hl(P'^\bullet)=\hl(P^\bullet)-1$.
If there is another arrow $p$ beginning with the starting point of $w'$, then
set $w''=p^{-1}w'=p^{-1}\kappa_1^+(\tilde{\alpha})$ and $P'^\bullet=\beta(P_{w''}^\bullet)$.
Thus $\dim H^{-n}(P'^\bullet[-n])=\dim H^{0}(P_{w''}^\bullet)
=l(\kappa_1^+(\tilde{\alpha}))+l(p)-1=l(\tilde{\alpha})-1
=\dim H^{-n}(P^\bullet_w)-1=\hl(P_w^\bullet)$, and the cohomologies of
other degrees vanish.

In the above three cases, the construction of the indecomposable object $P'^\bullet$ is based on
the generalized string obtained via truncation from the positive direction.
Indeed, in each case, we can also obtain another indecomposable object by
truncating the generalized strings from the negative direction.
We shall take the case (2) above for example.
First, we set $$i=\min\{j\;|\; \dim H^{-j}(P^\bullet_w)=\hl(P^\bullet_w); 0\leq j\leq n\}.$$
Now, we need to reduce the dimension of $i$-th cohomology by $1$ and eliminate the
$j$-th cohomology for $j<-i$.
We get a generalized string $w'=w_1\cdots w_{i}\kappa_1^-(w_{i+1})$ by truncation from
the negative direction. As in the case (1), we glue $w'$ and a generalized string together
if needed to eliminate the cohomology at certain degree. To be precise,
if there is no arrow $\alpha$ such that $\kappa_1^-(w_{i+1})\alpha\in I$,
then $P'^\bullet=P_{w'}^\bullet$ is also an indecomposable object with
$\hl(P'^\bullet)=\hl(P^\bullet)-1$ as required. If there is an arrow $\alpha$ with
$\kappa_1^-(w_{i+1})\alpha\in I$, then we set $w''=w_1\cdots w_{i}\kappa_1^-(w_{i+1})\overline{\alpha}$.
Then by a similar calculation,
$P'^\bullet=P_{w''}^\bullet$ is also an indecomposable object with
$\hl(P'^\bullet)=\hl(P^\bullet)-1$ as required.
Note that in this case, $P'^\bullet=P_{w''}^\bullet=\beta(P_{w'}^\bullet)$.

\medskip

{\bf Case 2: }Let $w=w_1w_2\cdots w_n$ be a generalized string. Without loss of generality,
assume that $w_{1}^{-1}, w_{2}^{-1},\cdots, w_q^{-1}\in{\bf Pa_{\geq 1}}$ and $w_{q+1}, w_{q+2},
\cdots, w_{r}\in {\bf Pa_{\geq 1}}$, while $w_{r+1}^{-1}\in {\bf Pa_{\geq 1}}$.
Then $w$ determines the indecomposable object $P^\bullet_w$ of form
{\tiny $$\xymatrix{ 0 \ar [r]&
P_{s(w_1)} \ar [r]^-{P(w_1^{-1})}&\cdots \ar [r]& P_{s(w_k)}\ar [r]^-{P(w_k^{-1})}&P_{s(w_{k+1})} \ar [r]^-{P(w_{k+1}^{-1})}
& \cdots \ar [r]^-{P(w_{q-1}^{-1})}&P_{s(w_q)}\ar [r]^-{P(w_q^{-1})} &P_{t(w_q)}  \\
 & & & P_{t(w_r)}\ar [r]^{P(w_r)}\ar [dr]_{P(w_{r+1}^{-1})}&P_{t(w_{r-1})} \ar [r]^{P(w_{r-1})}
& \cdots \ar [r]^{P(w_{q+2})}&P_{t(w_{q+1})}\ar [ur]_{P(w_{q+1})} &\\
& & & & P_{s(w_{r+2})} \ar [r]^{P(w_{r+2}^{-1})}&\cdots&&
}$$}

\noindent where $P_{s(w_1)}$ lies in the $0$-th component.

As illustrated above, there may be more than one indecomposable projective direct summands at a component.
Note that at each
component, we can order these indecomposable projective direct summands
{\it which have nonzero cohomology} along the generalized string $w$. For
example, in the above diagram, suppose the projective module $P_{s(w_{k+1})}$ lies in the $i$-th component, then
we write $P_w^i=P^i_1\oplus P^i_2\oplus P^i_3\oplus \cdots$,
where $P^i_1=P_{s(w_{k+1})}$, $P^i_2=P_{t(w_{r-1})}, \cdots$ since the cohomologies are nontrivial at these
direct summands. Then the cohomology of the degree $i$ is the direct summand of
cohomologies at these projective direct summands.

Now, as in Case 1, we want to construct an indecomposable object $P'^\bullet$ such that
$\hl(P'^\bullet)=\hl(P_w^\bullet)-1$. In order to reduce the dimension of cohomologies of $i$-th degree by 1, it
suffices to reduce the dimension of cohomologies at the first projective direct summand of $i$-th degree.
Indeed, we need to find a unique projective direct summand $Q$ satisfying

{\rm 1)} It is the first direct projective summand of its component under the ordering as above;

{\rm 2)} It lies in the $j$-th component such that $\dim H^j(P^\bullet)=\hl(P^\bullet)$;

{\rm 3)} It is the closest one from the starting point along the generalized string
 among those
satisfying {\rm 1)} and {\rm 2)}.

To construct an indecomposable object $P'^\bullet$ such that
$\hl(P'^\bullet)=\hl(P_w^\bullet)-1$,  we only need to construct such $P'^\bullet$
by reducing the dimension of cohomology at $Q$ by 1.
By the analysis in Case 1, we can manage this via
truncating the generalized string from positive or negative side and
gluing suitable generalized string of the form $\overline{p}^{-1}$ or $\overline{p}$  if needed,
except the following two case:

{\rm 1)} $Q$ is the {\it backward turning points} as $P_{t(w_q)}$, i.e.,
$Q=P_{t(w_i)}$ for some $i$ such that $w_i^{-1}, w_{i+1}\in {\bf Pa_{\geq 1}}$.
Let $Q=P_{t(w_i)}$ be a backward turning point. Then
the dimension of cohomology at this point $Q$, write $H^{t(w_i)}(P_w^\bullet)$
(it is unnecessarily the whole cohomology group at this degree)
$$\begin{array}{rcl} \dim H^{t(w_i)}(P_w^\bullet)&=&\dim P(t(w_{i}))-\dim \Image \big(P(w_{i}^{-1}), P(w_{i+1})\big) \\
&=&l(\widetilde{w_{i+1}})+l(\widetilde{w_{i}^{-1}})+1-\big(l(\widehat{w_{i+1}})+1\big)-\big(l(\widehat{w_{i}^{-1}})+1\big)\\
&=& l(w_{i+1})+l(w_{i}^{-1})-1.
\end{array}$$
Set $w'=\kappa^{+}_1(w_i)w_{i+1}\cdots w_n$.
As in Case 1(1), if there is an arrow $p$ such that $\kappa^{+}_1(w_i)p\in I$, then
we write $w''=\overline{p}^{-1}\kappa^{+}_1(w_i)w_{i+1}\cdots w_n$, and $w''=w'$ otherwise.
We have $\dim H^{t(w_i)}(P_{w''}^\bullet)=\dim H^{t(w_i)}(P_w^\bullet)-1$ and then
$\hl(P_{w''}^\bullet)=\hl (P_w^\bullet)-1$.

{\rm 2)} $Q$ is the
{\it forward turning point} as $P_{t(w_r)}$, i.e.,
$Q=P_{t(w_j)}$ for some $j$ such that $w_j, w_{j+1}^{-1}\in {\bf Pa_{\geq 1}}$.
Similarly let $Q=P_{t(w_j)}$ be a forward turning point. Then
the dimension of cohomology at this point
$$ \begin{array}{rcl}\dim H^{t(w_j)}(P_w^\bullet)&=&
\dim \Ker \big(P(w_{j}), P(w_{j+1}^{-1})\big)^T\\
&=&\dim \big(\Ker P(w_{j})\cap\Ker P(w_{j+1}^{-1})\big)\\
&=& 0,\end{array}$$
which is impossible by the choice of $Q$.
\end{proof}

Now we consider the indecomposable objects in $K^{-,b}(\proj A)\setminus K^b(\proj A)$.

\begin{theorem} Let $A$ be a gentle algebra. If there is an indecomposable
$P^\bullet\in K^{-,b}(\proj A)\setminus K^b(\proj A)$ such that $\hl(P^\bullet)=l>1$, then
there is an indecomposable $P'^\bullet\in D^b(A)$ with $\hl(P'^\bullet)=l-1$.
\end{theorem}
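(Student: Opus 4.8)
The plan is to reduce the statement to the previous theorem by analysing the effect of the operation $\beta$ on cohomology. By Theorem \ref{thm-const}, every indecomposable $P^\bullet \in K^{-,b}(\proj A)\setminus K^b(\proj A)$ is of the form $\beta(P_w^\bullet)$ for some generalized string $w$, and since $P^\bullet\notin K^b(\proj A)$ the bottom cohomology $H^{-n}(P_w^\bullet)$ that $\beta$ removes is nonzero. By the construction of $\beta$ recalled above, $H^{j}(\beta(P_w^\bullet))=H^{j}(P_w^\bullet)$ for all $j>-n$ and $H^{j}(\beta(P_w^\bullet))=0$ for $j\leq -n$; hence
$$\hl(\beta(P_w^\bullet))=\max\{\dim H^{j}(P_w^\bullet)\mid j>-n\}=l,$$
and, because $\beta$ only deletes cohomology, $\hl(P_w^\bullet)\geq l>1$.

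First I would split according to whether $\beta$ strictly lowers the cohomological length. If $\hl(P_w^\bullet)=l$, then $P_w^\bullet$ is an indecomposable of $K^b(\proj A)$ determined by a generalized string with $\hl(P_w^\bullet)=l>1$, so the previous theorem applies verbatim and yields an indecomposable $P'^\bullet\in D^b(A)$ with $\hl(P'^\bullet)=l-1$; nothing further is needed in this case.

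The remaining case is $\hl(P_w^\bullet)>l$. This forces $\dim H^{-n}(P_w^\bullet)>l$, a strict maximum attained only at the bottom degree that $\beta$ annihilates, while the maximum $l$ of $\beta(P_w^\bullet)$ is attained in degrees $>-n$. Put $j^{\ast}=\max\{j>-n\mid \dim H^{j}(P_w^\bullet)=l\}$, so that every cohomology in degrees $>j^{\ast}$ has dimension $<l$. I would then run the negative-direction truncation of the previous theorem at the position corresponding to $j^{\ast}$: truncate $w$ so as to retain exactly the degrees $\geq j^{\ast}$, lower $\dim H^{j^{\ast}}$ by one, and, if a relation requires it, glue on a walk of the form $\overline{\alpha}$ (so that the outcome has the shape $\beta(P_{w'}^\bullet)$), precisely as in the ordinary-point and backward-turning-point subcases of the previous theorem; the forward-turning-point alternative cannot occur at $j^{\ast}$ because it would force $\dim H^{j^{\ast}}=0$. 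This truncation deletes all cohomology in degrees $<j^{\ast}$, in particular the dominant term $H^{-n}$, and reduces the surviving maximal term sitting at $j^{\ast}$, so the resulting complex has cohomological length $l-1$; its indecomposability follows from Lemma \ref{prop-indec} and Lemma \ref{cut-coh}.

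The main obstacle is this second case. One must position the negative-direction cut so that it simultaneously removes the large bottom cohomology of $P_w^\bullet$ and leaves $j^{\ast}$ as the new minimal degree, and then verify that the accompanying local surgery lowers $\dim H^{j^{\ast}}$ by exactly one without raising any surviving cohomology in degrees $>j^{\ast}$ back up to $l$. Confirming that the truncated-and-glued walk remains a legitimate generalized string, so that its projective complex is defined, and that re-applying $\beta$ restores minimality and indecomposability, is the delicate bookkeeping; however, all of these verifications are already carried out in the negative-direction discussion of the previous theorem, so the genuine task is to transport that analysis from the global-maximum degree of $P_w^\bullet$ to the degree $j^{\ast}$.
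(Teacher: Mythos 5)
Your proposal is correct and takes essentially the same route as the paper: both pass to the bounded string complex underlying $P^\bullet$ (you via the $\beta$-form from Theorem \ref{thm-const}, the paper via brutal truncation $\sigma_{\geq j}$, which are interchangeable by Lemma \ref{prop-indec}), split according to whether the removed bottom cohomology exceeds $l$, apply the previous theorem directly in the first case, and in the second perform the same negative-direction truncation at a degree realizing $l$ followed by gluing (re-applying $\beta$) to kill everything below. The only divergence is bookkeeping: your maximal degree $j^*$ plays the role of the paper's summand $Q$ chosen closest to the starting point along $w$, and the two selection rules agree in the one-sided case and serve the same purpose in general.
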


\begin{proof}
Since $P^\bullet\in K^{-,b}(\proj A)\setminus K^b(\proj A)$ is indecomposable, by Theorem \ref{thm-const},
the brutal truncation $\sigma_{\geq j}(P^\bullet)\in K^b(\proj A)$ is indecomposable for some
$j\ll 0$, and $\sigma_{\geq j}(P^\bullet)=P_w^\bullet$ for
some generalized string $w$. Now we can
consider the complex $P_w^\bullet$ using the similar argument as Theorem 3.
If $\dim H^j(P_w^\bullet)\leq l$, then $\hl(P_w^\bullet)=l$ and the statement
from the previous theorem . Suppose $\dim H^j(P_w^\bullet)>l$. By a similar analysis in the
proof of previous theorem, we can find a unique projective direct summand $Q$
which satisfies the following: it is the first direct projective summand,
it lies in $m$-th component such that $\dim H^m(P_w^\bullet)=l$
and it is the closest one from the starting point along $w$.
Then we can construct $P_{w'}^\bullet$ by reducing the dimension of cohomology at $Q$ by 1.
Note that $\dim H^j(P_{w'}^\bullet)$ may have the maximal dimension among the cohomologies
of all degrees. If this is the case, then we have an indecomposable object $P_{w''}^\bullet$
obtained by gluing a generalized string to $w'$ to eliminate the cohomology of $j$-th degree as in the proof
of previous theorem and we are done.
\end{proof}

To finish the proof of Theorem \ref{thm-no-gap}, we only need to
prove the last case, i.e. for the indecomposable objects determined by generalized bands.

\begin{theorem} Let $A$ be a gentle algebra. If there is an indecomposable
$P^\bullet\in K^b(\proj A)$ determined by a generalized band $w$ such that $\hl(P^\bullet)=l>1$, then
there is an indecomposable $P'^\bullet\in D^b(A)$ with $\hl(P'^\bullet)=l-1$.
\end{theorem}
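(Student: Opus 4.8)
The plan is to avoid computing the cohomology of the band complex $P_{w,\lambda}^{\bullet}$ directly, and instead to reduce the band case to the two cases already settled in the previous two theorems (the string and $\beta$-string cases). The point is that a generalized band is an inexhaustible source of generalized strings: since $w$ is a band, condition (3) in the definition of ${\bf Gba}$ gives $w^N\in{\bf Gst}$ for every $N\ge 1$, so by Theorem \ref{thm-const} each $w^N$ determines an indecomposable string complex $P_{w^N}^{\bullet}\in K^b(\proj A)$. I will show that these have unbounded cohomological length, produce among them one of cohomological length at least $l$, and then descend to $l-1$ by iterating the two previous theorems.

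First I would record the shape of $P_{w^N}^{\bullet}$. Because $\mu_w(n)=0$, the function $\mu_{w^N}$ is periodic of period $n$: the positions $tn+p$ and $p$ (for $0\le p<n$) lie in the same cohomological degree $\mu_w(p)$, and an internal occurrence of a position has exactly the same two adjacent maps as the corresponding position of the primitive cyclic complex $P_{w,\lambda}^{\bullet}$ (with $d=1$). Hence, by the local, direct-summand description of the cohomology used in the proof of the string case above, the contribution of each position to $H^i(P_{w^N}^{\bullet})$ depends only on its phase $p$, and contributions coming from distinct positions sit in distinct indecomposable projective summands, so they add without cancellation. Since the primitive complex is a nonzero minimal object, it is nonzero in $D^b(A)$ and therefore has a positive local contribution $c_{p^\ast}\ge 1$ at some phase $p^\ast$, at degree $i^\ast=\mu_w(p^\ast)$. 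In $P_{w^N}^{\bullet}$ this phase occurs as an internal position at least $N-2$ times, all in degree $i^\ast$, whence
$$\hl(P_{w^N}^{\bullet})\ \ge\ \dim H^{i^\ast}(P_{w^N}^{\bullet})\ \ge\ (N-2)\,c_{p^\ast}\ \xrightarrow{\,N\to\infty\,}\ \infty .$$

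With unboundedness in hand the argument finishes quickly. I would choose $N$ with $\hl(P_{w^N}^{\bullet})\ge l$. Applying the two previous theorems to this string complex yields an indecomposable whose cohomological length is exactly one smaller; moreover the object produced there is again of the form $P_{w'}^{\bullet}$ or $\beta(P_{w'}^{\bullet})$, i.e. a string or $\beta$-string complex, so the same theorems apply to it in turn. Iterating, I obtain indecomposables of every cohomological length from $\hl(P_{w^N}^{\bullet})-1$ down to $1$; every intermediate length used is $\ge l>1$, so no step is blocked, and in particular an indecomposable of cohomological length $l-1$ appears. Note that this does not even use the given parameter $d$ of the band complex beyond the fact that it certifies $l\ge 2$.

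The hard part will be the unboundedness step. Everything hinges on the claim that the per-period contributions to a fixed cohomology group genuinely accumulate rather than interfere, which requires (i) the periodicity $\mu_{w^N}(tn+p)=\mu_w(p)$ coming from $\mu_w(n)=0$; (ii) the direct-sum description of $H^i(P_{w^N}^{\bullet})$ over the projective summands $P_{c(j)}$, exactly as computed in the string case, so that equal-phase contributions live in distinct summands and cannot cancel; and (iii) the observation that a nonzero minimal complex has at least one positive local contribution, supplying the phase $p^\ast$. A secondary point to verify is the legitimacy of the descent: one must confirm that the indecomposables delivered by the previous two theorems are always string or $\beta$-string complexes, which they are by their explicit constructions, so that those theorems remain applicable at every step.
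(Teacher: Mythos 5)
Your proposal is correct, but it takes a genuinely different route from the paper's in the key step. Both proofs rest on the same pivotal observation---that a generalized band can be untied into generalized strings by taking powers, since condition (3) in the definition of $\bf{Gba}$ makes every $w^N$ a generalized string---but they exploit it differently. The paper takes the specific power $w'=w^d$ (with $d$ the parameter of the given band complex) and proves the exact degreewise equality $\dim H^i(\beta(P_{w'}^\bullet))=\dim H^i(P_{w,\lambda}^\bullet)$ for all $i$ (equal to $d\cdot\dim H^i(P_w^\bullet)$ for $i\neq 0$, and $0$ at the minimal degree, where the Jordan block is absorbed because $J_{\lambda,d}$ is invertible), so that $\beta(P_{w'}^\bullet)$ has cohomological length exactly $l$ and a single application of the two previous theorems finishes. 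You instead forgo any computation of the band complex's cohomology (never using $d$ or $\lambda$), prove only that $\hl(P_{w^N}^\bullet)$ grows linearly in $N$ via the periodicity $\mu_{w^N}(tn+p)=\mu_w(p)$ and the positionwise direct-sum decomposition of cohomology, and then descend step by step by iterating the previous theorems. Your two extra obligations are both discharged: the no-cancellation decomposition of $H^i$ over the projective summands is exactly what the paper itself uses in its Case 2 string analysis, and the outputs of the previous theorems are indeed always of the form $P_{w'}^\bullet$, $P_{w''}^\bullet$ or $\beta(P_{w'}^\bullet)$, so the iteration never encounters a band complex and no circularity arises (every intermediate length is $\geq l\geq 2$, so no step is blocked). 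One simplification worth noting for your step (iii): since $\mu_w(n)=0$, the band cannot be one-sided, hence it contains a backward turning point $w_{p^\ast}^{-1},w_{p^\ast+1}\in{\bf Pa_{\geq 1}}$, whose local contribution is $l(w_{p^\ast+1})+l(w_{p^\ast}^{-1})-1\geq 1$ by the paper's own formula; this yields the positive phase $c_{p^\ast}\geq 1$ directly, without appealing to nonvanishing of the $d=1$ cyclic complex (which would require justifying the local decomposition for the cyclic complex as well). In exchange for the longer descent, your argument is more robust: it sidesteps the band-cohomology formula entirely, whereas the paper's one-shot reduction depends on that exact computation.
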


\begin{proof}
Let $w=w_1w_2\cdots w_n$ be a generalized band.
We assume without loss of
generality that $w_1^{-1},w_n\in {\bf Pa_{\geq 1}}$  and
$$\mu(0)=\mu(n)=\min \{\mu(i)\; |\; 0\leq i\leq n\}.$$ Then $w$ determines a family of indecomposable objects
$\{P_{w,\lambda}^\bullet\;|\; w\in {\bf Gba},  \lambda \in k^*, \; d>0, i\in \mathbb{Z}\},$ where
$P_{w,\lambda}^\bullet$ has the form of
{\tiny $$\xymatrix@!=1pc{
P_{s(w_1)}^d \ar [rrd]_-{P(w_n)\mathbf{J}_{\lambda,d}} \ar [rr]^-{P(w_1^{-1})
\mathbf{I}_d} && P_{s(w_2)}^d \ar [rr] &&
\cdots  \ar[rr]&& P_{s(w_r)}^d
\ar [rr]^-{P(w_r)\mathbf{I}_d} && P_{t(w_r)}^d \\
&& P_{s(w_n)}^d \ar [rr] && \cdots \ar [rr]&& P_{t(w_{r+1})}^d \ar [rru]_-{P(w_{r+1})\mathbf{I}_d} }$$}
where $P_{s(w_1)}$ lies in the $0$-th component.

By the previous two theorems, it is sufficient to find a generalized string
$w'$ such that $\hl(\beta(P_{w'}^\bullet))=\hl(P_{w,\lambda}^\bullet)$.
We claim the generalized string $w'=(w_1w_2\cdots w_n)^d$ is the one as required.
Roughly speaking, the complex $P_{w'}^\bullet$ can be seen as the one untying the ``band complex" $P_{w,\lambda}^\bullet$
into a ``string complex". Let $P_w^\bullet$ be the indecomposable
object determined by $w=w_1w_2\cdots w_n$ viewed as a generalized string.
Then for any $i\in\mathbb{Z}$ except $i=0$,
$$\dim H^i(P_{w,\lambda}^\bullet)=d\cdot \dim H^i(P_w^\bullet)=\dim H^i(\beta(P_{w'}^\bullet)).$$
Moreover, if $i=0$, then
$$\dim H^0(P_{w,\lambda}^\bullet)=
\dim \big(\Ker P(w_1^{-1}){\bf I_d} \cap \Ker P(w_n) {\bf J_{\lambda,d}}\big)=0=\dim H^0(\beta(P_{w'}^\bullet)).$$
Therefore, $\hl(\beta(P_{w'}^\bullet))=\hl(P_{w,\lambda}^\bullet)$ as claimed.

\end{proof}

\section{A negative answer to question II}

In this section, we will construct a gentle algebra which provides a negative answer to
Question II.

Let $A_0=kQ/I$ be the gentle algebra defined by the quiver
$$\xymatrix{ &1 \ar[d]^{\alpha_1}& & & & \\
 3& 2 \ar [l]_{\alpha_{2}}\ar
[r]^{\alpha_3}& 4 \ar [r]^{\alpha_4} &5 \ar [r]^{\alpha_5} & 6 \ar [r]^{\alpha_6} &7
}$$ and
the admissible ideal generated by $\alpha_1\alpha_3$. Now we consider the
indecomposable object $P_w^{\bullet}$ determined by generalized string $w=\alpha_1$, where
$$P_w^{\bullet}=\xymatrix{
0 \ar [r]& P_2 \ar [r]^{P(w)} &P_1 \ar [r] &0},$$ with $P_1$ in the $0$-th component.
Clearly, $\dim H^{-1}(P^\bullet_w)=4$ and $\dim H^0(P^\bullet_w)=1$. So
$\hr(P^\bullet_w)=\hl(P^\bullet_w)\cdot \hw(P^\bullet_w)=8$.

Next we claim that there is no indecomposable object in $D^b(A_0)$
with cohomological range $7$. Assume to the contrary that there is an indecomposable
$P^\bullet\in K^b(\proj A_0)$ with $\hr(P^\bullet)=7$, then $\hw(P^\bullet)=7$ or $\hl(P^\bullet)=7$.
We shall show they are impossible. Indeed, by the description due to \cite{BM03}, the
indecomposables in the $D^b(A_0)$ are determined by the generalized strings in $A_0$.
Since the indecomposables in $D^b(A_0)$ are determined by the generalized strings, we have
$$\mbox{\rm gl.hw} A_0 := \sup\{\hw(X^{\bullet}) \; | \; X^{\bullet} \in D^b(A_0) \mbox{ is
indecomposable}\}= 3.$$ Moreover, since any generalized string in $A_0$ is one-sided,
each component of the indecomposable object $P^\bullet_w\in
K^b(\mbox{proj} A_0)$ is indecomposable, and then
$$\mbox{\rm gl.hl} A_0 := \sup\{\hl(X^{\bullet}) \; | \; X^{\bullet} \in D^b(A_0) \mbox{ is
indecomposable}\}\leq \dim P_2=6.$$

\medskip

\noindent {\footnotesize {\bf ACKNOWLEDGEMENT.} I would like to thank Yang Han,
for his help and support during my visit in Academy of Mathematics and System Sciences, CAS,
and also for discussions related to this paper. The author is
supported by the National Natural Science Foundation of China (Grant No. 11601098)
and Natural Science Foundation of Guizhou Province (Grant No. QSF[2016]1038).}

\vspace{2mm}

\footnotesize

\end{document}